\newtheorem{thm}{Theorem}[section]
\newtheorem{cor}[thm]{Corollary}
\theoremstyle{definition}
\newtheorem{defn}[thm]{Definition}
\theoremstyle{remark}
\newcommand{\system}[1]{\mbox{\fontfamily{cmss}\fontshape{n}\fontseries{m}%
    \selectfont#1}}
\newcommand{\RCA}{\system{RCA}\ensuremath{_0}}
\newcommand{\RT}{\system{RT}}
\newcommand{\ACA}{\system{ACA}\ensuremath{_0}}
\newcommand{\SRT}{\system{SRT}}
\newcommand{\HT}{\system{HT}}
\newcommand{\D}{\system{D}}
\begin{document}

\title{Effectiveness of Hindman's Theorem for bounded sums}

\author[D.\ D.\ Dzhafarov]{Damir D. Dzhafarov}
\address{Department of Mathematics\\
University of Connecticut\\
196 Auditorium Road\\ Storrs, Connecticut 06269 U.S.A.}
\email{damir@math.uconn.edu}

\author[C.\ G.\ Jockusch]{Carl G. Jockusch, Jr.}
\address{Department of Mathematics\\
University of Illinois\\
1409 W.\ Green Street, 
Urbana, Illinois 61801, U.S.A.}
\email{jockusch@math.uiuc.edu}

\author[R.\ Solomon]{Reed Solomon}
\address{Department of Mathematics\\
University of Connecticut\\
196 Auditorium Road\\ Storrs, Connecticut 06269 U.S.A.}
\email{david.solomon@uconn.edu}

\author[L.\ B.\ Westrick]{Linda Brown Westrick}
\address{Department of Mathematics\\
University of Connecticut\\
196 Auditorium Road\\ Storrs, Connecticut 06269 U.S.A.}
\email{linda.westrick@uconn.edu}

\thanks{Dzhafarov was partially supported by NSF grant DMS-1400267. Jockusch thanks his coauthors for their hospitality during a visit to the University of Connecticut in May, 2015, during which the work in this paper was largely done. His visit was supported by the University of Connecticut Department of Mathematics.}

\thanks{Keywords: Hindman's Theorem, computable combinatorics, Ramsey's Theorem, reverse mathematics}

\thanks{AMS Subject Classification: 03D80, 05D10 (03B30, 03F35)}

\begin{abstract}
  We consider the strength and effective content of restricted
  versions of Hindman's Theorem in which the number of colors is
  specified and the length of the sums has a specified finite bound.
  Let $\HT^{\leq n}_k$ denote the assertion that for each $k$-coloring
  $c$ of $\mathbb{N}$ there is an infinite set $X \subseteq
  \mathbb{N}$ such that all sums $\sum_{x \in F} x$ for $F \subseteq
  X$ and $0 < |F| \leq n$ have the same color.  We prove that there is
  a computable $2$-coloring $c$ of $\mathbb{N}$ such that there is no
  infinite computable set $X$ such that all nonempty sums of at most
  $2$ elements of $X$ have the same color.  It follows that $\HT^{\leq
    2}_2$ is not provable in $\RCA$ and in fact we show that it
  implies $\SRT^2_2$ in $\RCA$.  We also show that there is a
  computable instance of $\HT^{\leq 3}_3$ with all solutions computing
  $0'$.  The proof of this result shows that $\HT^{\leq 3}_3$ implies
  $\ACA$ in $\RCA$.
\end{abstract}

\maketitle

\section{Introduction}

Hindman's Theorem (denoted $\HT$) asserts that for every coloring of
$\mathbb{N}$ with finitely many colors there is an infinite set $X
\subseteq \mathbb{N}$ such that all nonempty finite sums of distinct
elements of $X$ have the same color.  Hindman's Theorem was proved by
Neil Hindman \cite{H}.  Hindman's original proof was a complicated
combinatorial argument, and simpler proofs have been subsequently
found.  These include combinatorial proofs by Baumgartner \cite{B} and
by Towsner \cite{T} and a proof using ultrafilters by Galvin and
Glazer (see \cite{C}).

We assume that the reader is familiar with the basic concepts of
computability theory and of reverse mathematics.  For information on
these topics see, respectively, the books by Soare \cite{So} and
Simpson \cite{Si}.  Our notation is standard.  In particular, let
$\mathbb{N}$ be the set of positive integers, and for $k \in
\mathbb{N}$ we identify $k$ and $\{0, 1, \dots, k - 1\}$.  A
$k$-\emph{coloring} of $\mathbb{N}$ is a function $c : \mathbb{N}
\rightarrow k$.  A set $Z \subseteq \mathbb{N}$ is
\emph{monochromatic} for a coloring $c$ if $c(x) = c(y)$ for all $x, y
\in Z$.

The effective content of Hindman's Theorem and its strength as a
sentence of second-order arithmetic were studied by Blass, Hirst, and
Simpson \cite{BHS}.  They showed that every computable instance $c$ of
$\HT$ has a solution $X$ computable from $0^{(\omega +2)}$ and,
correspondingly, that $\HT$ is provable in the system ACA$_0^+$
obtained by adding to $\RCA$ the statement $(\forall X) [X^{(\omega)}
\text{ exists}]$.  In the other direction, they showed that there is a
computable instance $c$ of $\HT$ such that all solutions $X$ compute
$0'$ and, correspondingly, that $\HT$ implies $\ACA$ in ${\RCA}$.

There is obviously a significant gap between the upper and lower
bounds given in the previous paragraph, and closing these gaps has been
a major issue in reverse mathematics.  In particular it is
not known whether there is an $n$ such that every computable instance
of Hindman's Theorem has a $\Sigma^0_n$ solution and, correspondingly,
whether $\HT$ is provable from $\ACA$ in $\RCA$.

In the current paper we study the strength and effective content of
Hindman's Theorem when it is restricted to sums of bounded length.
One might think that such restricted versions of Hindman's Theorem are
far weaker than Hindman's Theorem itself, but in fact it is unknown
whether this is true.  In fact it is a major open problem in
combinatorics (see \cite{HLS}, Question 12) whether every proof of
Hindman's Theorem for sums of length at most two also proves Hindman's
Theorem.  We now state these bounded versions formally.

\begin{defn}
  For a finite nonempty set $F \subseteq \mathbb{N}$, we let $\sum F$
  denote the sum of the elements of $F$. For $X \subseteq \mathbb{N}$
  and $n \geq 1$, we define
\[
\text{FS}^{\leq n}(X) =  \left\{ \sum F \mid F \subseteq X \text{ and } 1 \leq |F| \leq n \right\}.
\] 
\end{defn}

\begin{defn}
  Let $\HT^{\leq n}_k$ denote the statement that for every coloring $c:
  \mathbb{N} \rightarrow k$, there is an infinite set $X$ such that
  $\text{FS}^{\leq n}(X)$ is monochromatic.
\end{defn}

We show in Section \ref{22} that for every $\Delta^0_2$ set $X$ there
is a computable instance $c$ of $\HT^{\leq 2}_2$ such that every
solution $H$ to $c$ computes an infinite subset of $X$ or
$\overline{X}$.  It follows that $\HT^{\leq 2}_2$ has a computable
instance with no computable solution and hence is not provable in
$\RCA$.  In fact, our proof shows that $\HT^{\leq 2}_2$ implies $\SRT^2_2$
(Stable Ramsey's Theorem for $2$-colorings of pairs) in $\RCA$.   Next we show in Section
\ref{33} that there is a computable instance of $\HT^{\leq 3}_3$ such that
every solution computes $0'$ and, correspondingly, that $\HT^{\leq 3}_3$
implies $\ACA$ in $\RCA$.  Our proof uses a very ingenious trick from
Blass, Hirst, and Simpson \cite{BHS}, combined with some new ideas.

The final section lists many open questions.

\section{Hindman's Theorem for sums of length at most $2$}    \label{22}

Our first theorem concerns $\HT^{\leq 2}_ 2$ and implies that it has
a computable instance $c$ with no computable solution $X$.  

\begin{thm}
  Let $A$ be a $\Delta^0_2$ set. There is a computable coloring
  $c:\mathbb{N} \rightarrow 2$ such that if $W$ is an infinite set
  with $\text{FS}^{\leq 2}(W)$ monochromatic, then there is an
  infinite set $Y \leq_T W$ such that $Y \subseteq A$ or $Y \subseteq
  \overline{A}$.
\end{thm}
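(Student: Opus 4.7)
My plan is to encode a $\Delta^0_2$ approximation of $A$ into $c$ by exploiting the $2$-adic structure of $\mathbb{N}$. By the Shoenfield Limit Lemma, fix a computable $f: \mathbb{N} \times \mathbb{N} \to 2$ with $\lim_s f(x, s) = \chi_A(x)$ for each $x$. A natural first attempt uses the unique decomposition $n = 2^a(2b+1)$: set $c(n) = f(\nu_2(n), n)$, so that $\nu_2(n) = a$ plays the role of an index into $A$ and $n$ itself serves as the stage of the approximation.

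Given $W$ infinite with $\text{FS}^{\leq 2}(W)$ $c$-monochromatic of color $j$, I would split on the set $S := \{\nu_2(w) : w \in W\}$. In the principal case $S$ is infinite, a $W$-computable infinite subset $Y \subseteq S$ is available by the standard trick that any $W$-r.e.\ infinite subset of $\mathbb{N}$ has a $W$-computable infinite subset. For each $y \in Y$, fix $w \in W$ with $\nu_2(w) = y$ and let $w' \in W$ range over the infinitely many elements with $\nu_2(w') > y$; then $\nu_2(w+w') = \nu_2(w) = y$, so monochromaticity yields $f(y, w+w') = c(w+w') = j$. As $w'$ grows, $w+w'$ is arbitrarily large, so $\chi_A(y) = \lim_s f(y,s) = j$. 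This gives $Y \subseteq A$ when $j = 1$ and $Y \subseteq \overline{A}$ when $j = 0$.

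The main obstacle is the complementary case, when $S$ is finite. Pigeonhole pins down $a^*$ with $W' := \{w \in W : \nu_2(w) = a^*\}$ infinite, and the singleton argument only recovers the single bit $\chi_A(a^*) = j$. This case really does arise with the bare definition: whenever $\chi_A(0) = \chi_A(1)$, the arithmetic progression $W = \{4k+1 : k \geq K\}$ is a valid monochromatic solution for $K$ sufficiently large, yet $\nu_2(W \cup (W+W)) = \{0,1\}$, so the above only produces the finite index set $\{0,1\}$. To push through, the paper's $c$ must be a refinement that forces $\nu_2$ to be unbounded on $W \cup (W+W)$ for every monochromatic $W$, or that supplies a second infinite source of indices (e.g.\ by exploiting that for $w, w' \in W'$ one has $\nu_2(w+w') > a^*$, so that the valuations of pair sums can themselves grow). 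Weaving enough additional $2$-adic or stage-approximation data into $c$ to preclude degenerate configurations like the $\{4k+1\}$ example, while keeping $c$ computable and the extracted $Y$ computable from $W$, is the principal technical hurdle.
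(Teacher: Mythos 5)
Your encoding idea is the right one in spirit (limit-lemma plus positional decomposition), and you have correctly located the obstruction: with the bare coloring $c(n)=f(\nu_2(n),n)$ nothing prevents a monochromatic $W$ from concentrating on a single valuation class, and your $\{4k+1\}$ example shows this really happens. But having named the obstacle is not the same as having removed it, and the proposal stops exactly where the real work begins. As written, there is no proof.

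The mechanism the paper uses to close this gap is worth knowing, because it is a single clean trick rather than ``weaving in more data.'' Work in the bijective base-$3$ representation, writing $n = i_0 3^{k_0} + \cdots + i_m 3^{k_m}$ with digits $i_j \in \{1,2\}$, and let $k_n = k_0$, $i_n = i_0$. Set $\mathcal{O}_{k,i} = \{n : n \equiv i\cdot 3^k \pmod{3^{k+1}}\}$. Define $c(n) = f(k_n,n)$ when $i_n=1$ and $c(n) = 1 - f(k_n,n)$ when $i_n=2$. Two arithmetic facts do all the work: (a) if $s,t \in \mathcal{O}_{k,i}$ then $s+t \in \mathcal{O}_{k,3-i}$ (the leading digit flips, the position $k$ stays), and (b) if $s \in \mathcal{O}_{k,i}$ and $t$ has larger leading position then $s+t \in \mathcal{O}_{k,i}$. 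Fact (a) is precisely what your $2$-adic version lacks: if $W \cap \mathcal{O}_{k,i}$ were infinite, then $\text{FS}^{\leq 2}(W)$ would contain infinitely many elements of $\mathcal{O}_{k,1}$ and infinitely many of $\mathcal{O}_{k,2}$, but for large arguments these receive colors $A(k)$ and $1-A(k)$ respectively, so $\text{FS}^{\leq 2}(W)$ could not be monochromatic. Hence every $W\cap\mathcal{O}_{k,i}$ is finite, which forces $k_w$ to be unbounded on $W$ and yields infinitely many ``informative'' indices $k$. Fact (b), together with the inverted/direct two-track coloring, then lets $W$ read off $A(k)$ from any $w\in W\cap\mathcal{O}_{k,i}$ (as $c(w)$ if $i=1$, as $1-c(w)$ if $i=2$), by comparing against large sums $w+b$ that stay in $\mathcal{O}_{k,i}$. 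From there one defines the $W$-c.e.\ sets $B_0, B_1$ of indices computed to be out of or in $A$, one of which is infinite, and thins to a $W$-computable infinite subset exactly as in your Case 1. So the fix is not an ad hoc patch but a change of decomposition whose additive behavior (leading digit flip under same-position sums) is precisely what prevents the degenerate configuration you exhibited.
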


\begin{proof}
  Fix a $\Delta^0_2$ set $A$ and a computable $\{0,1\}$-valued function $f(k,s)$ such
  that $A(k) = \lim_s f(k,s)$. For $k \geq 0$ and $i \in \{ 1,2 \}$,
  define
\[
\mathcal{O}_{k,i} = \{ s \in \mathbb{N} \mid s \equiv i \cdot 3^k \, \text{mod} \, 3^{k+1} \}.
\]
If $s$ is written as $s = i_0 \cdot 3^{k_0} + \cdots + i_m \cdot
3^{k_m}$ with $k_0 < \cdots < k_m$ and each $i_j \in \{ 1,2 \}$, then
$s \in \mathcal{O}_{k,i}$ if and only if $k=k_0$ and $i = i_0$.  The
sets $\mathcal{O}_{k,i}$ give a computable partition of $\mathbb{N}$
such that if $s,t \in \mathcal{O}_{k,1}$, then $s+t \in
\mathcal{O}_{k,2}$ and if $s,t \in \mathcal{O}_{k,2}$, then $s+t \in
\mathcal{O}_{k,1}$. Furthermore, if $s \in \mathcal{O}_{k,i}$ and $t
\in \mathcal{O}_{k',i'}$ with $k < k'$ and $i' \in \{1,2\}$, then $s+t \in
\mathcal{O}_{k,i}$. For any $s \in \mathbb{N}$, we let $k_s, i_s$
be the unique numbers $k, i$ such that $s \in \mathcal{O}_{k,i}$. We
define our coloring $c$ by
\[
c(s) = 
\begin{cases}
f(k_s,s) & \text{if } i_s = 1, \\
1-f(k_s,s) & \text{if } i_s = 2.
\end{cases}
\]
The first important property of this coloring is that for each $k$ we have
$c(s) \neq c(t)$ whenever $s  \in \mathcal{O}_{k,1}$ and $t \in \mathcal{O}_{k,2}$ are both sufficiently large.    This holds
since for  sufficiently large $s \in \mathcal{O}_{k,1}$ and $t \in \mathcal{O}_{k,2}$  we have $c(s) = f(k, s) = A(k)$ and $c(t) = 1 - f(k , t) = 1 - A(k)$.   It follows that for any monochromatic set $Z$, either
$Z \cap \mathcal{O}_{k,1}$ is finite or $Z \cap \mathcal{O}_{k,2}$ is finite.

Fix an infinite set $W$ with $\text{FS}^{\leq 2}(W)$ monochromatic.    We claim that $W \cap \mathcal{O}_{k,i}$ is finite for each $k \in \mathbb{N}$ and $i \in \{1,2\}$.    Suppose first that $W \cap \mathcal{O}_{k,1}$ is infinite.    Let $S$ be the set of all sums $a + b$ where $a, b$ are distinct elements of
$W \cap \mathcal{O}_{k,1}$.   Then $S$ is infinite and $S \subseteq \mathcal{O}_{k,2} \cap \text{FS}^{\leq 2}(W)$.    Let $Z =
W \cup S$.   Then $Z$ is monochromatic since $Z \subseteq \text{FS}^{\leq 2}(W)$.    Furthermore,
$Z \cap \mathcal{O}_{k,1}$ and $Z \cap \mathcal{O}_{k,2}$ are both infinite, contradicting the previous paragraph.   This shows that $W \cap \mathcal{O}_{k,1}$ is finite, and the proof that $W \cap \mathcal{O}_{k,2}$ is finite is analogous.  It follows
that there are infinitely many $k$ such that $W \cap (\mathcal{O}_{k,1} \cup \mathcal{O}_{k,2})$ is nonempty.   We call such numbers $k$ \emph{informative} since, as the next claim shows, $W$ can compute $A(k)$ for all informative $k$.

We claim that if $s \in W \cap \mathcal{O}_{k,i}$ then
\[
A(k) = 
\begin{cases}  c(s) & \text{if }  i = 1, \\
1-c(s) & \text{if } i = 2.
\end{cases}
\]
To prove the above claim, fix $s \in W \cap \mathcal{O}_{k,i}$.   Note that
$\text{FS}^{\leq 2}(W) \cap \mathcal{O}_{k, i}$ is infinite, since it contains all
sums $s+b$ with  $b \in W \cap \mathcal{O}_{k', i'}$ for some $k' > k$, and 
$i' \in \{1,2\}$, and there are infinitely many such $b$.
Let $t$ be an element of $\text{FS}^{\leq 2}(W) \cap \mathcal{O}_{k, i}$  sufficiently large that
$f(k,t) = A(k)$.   Since $\text{FS}^{\leq 2}(W)$  is monochromatic,
$c(s) = c(t)$.   Hence  $c(s) = c(t) = f(k,t) = A(k)$ if $i = 1$,   and $c(s) = c(t) = 1 - f(k,t) = 1 - A(k)$
if $i = 2$.   The claim is proved.

For $i \in \{0,1\}$ let $B_i$ be the set of numbers $k$ such that $W$ can compute that $A(k) = i$.  More precisely, define
\[
B_i = \{k \mid (\exists n)[(n \in W \cap \mathcal{O}_{k,1} \ \& \ c(n) = i) \text{ or } (n \in \mathcal{O}_{k,2}
\ \& \ c(n) = 1-i)]\}
\]

By the above claim, $B_1 \subseteq A$ and $B_0 \subseteq \overline{A}$.   Also, each set $B_i$ is c.e. in $W$.   Finally, if $k$ is informative, then $k \in B_0 \cup B_1$.  Since there are infinitely many informative numbers, $B_0 \cup B_1$ is infinite, and so $B_0$ or $B_1$ is infinite.   Fix $i$ such that $B_i$ is infinite, and let $Y$ be an infinite
$W$-computable subset of $B_i$.    Then $Y$ is the desired infinite $W$-computable subset of
$A$ or $\overline{A}$.
 \end{proof}

The next corollary follows by taking $A$ to be a bi-immune $\Delta^0_2$ 
set, for example a $\Delta^0_2$ $1$-generic set.

\begin{cor}
  There is a computable coloring $c:\mathbb{N} \rightarrow 2$ such
  that if $X$ is an infinite computable set, then $\text{FS}^{\leq
    2}(X)$ is not monochromatic.
\end{cor}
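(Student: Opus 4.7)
The plan is to invoke the preceding theorem with a carefully chosen $\Delta^0_2$ set $A$, namely one that is bi-immune. Recall that a set $A \subseteq \mathbb{N}$ is \emph{bi-immune} if neither $A$ nor $\overline{A}$ contains an infinite computable subset. A standard construction (for instance, building a $\Delta^0_2$ $1$-generic set by finite extensions relative to $\emptyset'$) produces such an $A$, and I would begin by citing or sketching that fact.

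Given such an $A$, apply the theorem to obtain a computable coloring $c : \mathbb{N} \to 2$. I claim this $c$ witnesses the corollary. Suppose for contradiction that there is some infinite computable set $X$ with $\text{FS}^{\leq 2}(X)$ monochromatic. The theorem then provides an infinite set $Y \leq_T X$ such that $Y \subseteq A$ or $Y \subseteq \overline{A}$. Since $X$ is computable and $Y \leq_T X$, the set $Y$ is itself computable. Thus either $A$ or $\overline{A}$ contains an infinite computable set $Y$, contradicting the bi-immunity of $A$.

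The only nontrivial ingredient is the existence of a bi-immune $\Delta^0_2$ set. I would dispatch this in a line or two: a $\Delta^0_2$ $1$-generic $A$ meets, for each computable infinite set $W_e$, the dense set of conditions forcing some element of $W_e$ into $A$ and some element of $W_e$ into $\overline{A}$, so neither $A$ nor $\overline{A}$ can contain $W_e$; since every infinite computable set is a $W_e$, bi-immunity follows. With that in hand, the rest of the corollary is immediate from the theorem, so there is no substantive obstacle to overcome.
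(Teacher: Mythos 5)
Your argument is exactly the paper's: take $A$ to be a bi-immune $\Delta^0_2$ set (e.g., a $\Delta^0_2$ $1$-generic), apply the preceding theorem, and note that a computable $X$ would yield a computable infinite subset of $A$ or $\overline{A}$, contradicting bi-immunity. Your brief justification that $1$-generics are bi-immune is correct and matches the intended reading.
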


The next corollary follows immediately.

\begin{cor}
    $\HT^{\leq 2}_2$ is not provable in $\RCA$.
\end{cor}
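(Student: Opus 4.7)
The plan is to invoke the standard $\omega$-model soundness argument. Recall that the structure $\REC$, whose first-order part is $\mathbb{N}$ and whose second-order part consists of the computable subsets of $\mathbb{N}$, is an $\omega$-model of $\RCA$. Consequently, any sentence provable in $\RCA$ must hold in $\REC$. So to show that $\HT^{\leq 2}_2$ is not provable in $\RCA$, it suffices to exhibit a failure of $\HT^{\leq 2}_2$ in $\REC$.

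The sentence $\HT^{\leq 2}_2$ is $\Pi^1_2$, of the form ``for every coloring $c:\mathbb{N}\to 2$ there exists an infinite $X\subseteq\mathbb{N}$ with $\mathrm{FS}^{\leq 2}(X)$ monochromatic.'' Interpreted in $\REC$, both quantifiers range over computable sets. The previous corollary produces a single computable coloring $c:\mathbb{N}\to 2$ such that no infinite computable $X$ has $\mathrm{FS}^{\leq 2}(X)$ monochromatic. This $c$ therefore witnesses that $\HT^{\leq 2}_2$ fails in $\REC$, and so $\HT^{\leq 2}_2$ is not a theorem of $\RCA$. There is no real obstacle here: the content lies entirely in the previous corollary, and the present step is only the routine translation from a computability-theoretic statement to a reverse-mathematical unprovability result.
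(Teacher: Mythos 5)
Your proof is correct and is exactly the standard argument the paper has in mind when it says the corollary ``follows immediately'' from the preceding one: the computable sets form an $\omega$-model $\REC$ of $\RCA$, the previous corollary gives a computable instance of $\HT^{\leq 2}_2$ with no computable solution, so $\HT^{\leq 2}_2$ fails in $\REC$ and hence is not provable in $\RCA$.
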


We now sharpen the previous corollary.  Let $\D^2_2$ be the assertion
that for every $\{0,1\}$-valued function $f(x,s)$ such that for all
$x$, $\lim_s f(x,s)$ exists there is an infinite set $G$ and $j < 2$
such that $\lim_s f(x,s) = j$ for all $x \in G$.  (The principle
$\D^2_2$ was defined in \cite{CJS}, Section 7.)  

\begin{cor}
$\RCA \vdash \HT^{\leq 2}_2 \rightarrow \D^2_2$.
\end{cor}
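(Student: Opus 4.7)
The plan is to reduce $\D^2_2$ directly to the theorem just proved. Given a $\{0,1\}$-valued function $f(x,s)$ as in the hypothesis of $\D^2_2$, the $\Delta^0_2$ set $A$ defined by $A(x) = \lim_s f(x,s)$ plays the role of the $\Delta^0_2$ set in the theorem. Applying the theorem yields a computable coloring $c$ (constructed directly from $f$), and I would then invoke $\HT^{\leq 2}_2$ on $c$ to obtain an infinite set $W$ with $\text{FS}^{\leq 2}(W)$ monochromatic. By the conclusion of the theorem, there is an infinite $W$-computable set $Y$ with $Y \subseteq A$ or $Y \subseteq \overline{A}$. In the first case I would take $G = Y$ and $j = 1$; in the second, $G = Y$ and $j = 0$. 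Either way, $\lim_s f(x,s) = j$ for every $x \in G$, which is exactly the conclusion demanded by $\D^2_2$.

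The substantive task is to verify that the proof of the theorem formalizes in $\RCA$. The construction of $c$ from $f$, together with the partition $\{\mathcal{O}_{k,i}\}$, is visibly $\Delta^0_1$ in $f$, so $c$ exists in $\RCA$ by recursive comprehension. Given $W$, the sets $B_0$ and $B_1$ from the theorem are $\Sigma^0_1(W)$, and the combinatorial claims used in the proof (that each $W \cap \mathcal{O}_{k,i}$ is finite and that infinitely many $k$ are informative) follow from the monochromaticity of $\text{FS}^{\leq 2}(W)$ by arguments requiring only $\Sigma^0_1$-induction and bounded reasoning, all of which are available in $\RCA$.

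The step I expect to require the most care is the final extraction of an infinite $W$-computable $Y$ from the $\Sigma^0_1(W)$ set $B_i$ known to be infinite. This uses the standard fact, provable in $\RCA$, that every infinite set c.e.\ in $W$ has an infinite $W$-computable subset, obtained by thinning the enumeration to a strictly increasing subsequence. One also needs the $\RT^1_2$-style observation that if $B_0 \cup B_1$ is infinite then at least one of $B_0, B_1$ is infinite; this too is routinely provable in $\RCA$ and supplies the value of $j$ to be used in $\D^2_2$.
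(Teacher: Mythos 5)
Your proposal is correct and matches the paper's intent exactly; the paper's own proof is the one-line remark that the corollary ``follows by formalizing the proof of the theorem and the proof of the Limit Lemma,'' and your write-up is simply that formalization spelled out, including the key observations that the coloring $c$ is $\Delta^0_1$ in $f$, that one of the $\Sigma^0_1(W)$ sets $B_0,B_1$ must be infinite, and that an infinite c.e.-in-$W$ set has an infinite $W$-computable subset, all provable in $\RCA$.
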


The proof follows by formalizing the proof of the theorem and the
proof of the Limit Lemma.

It was shown by Chong, Lempp and Yang (\cite{CLY}, Theorem 1.4) that $D^2_2$ implies
 $\Sigma^0_2$-bounding (B$\Sigma^0_2$) in \RCA, and hence (justifying a hidden
use of B$\Sigma^0_2$ in the proof of \cite{CJS}, Lemma 7.10), $\D^2_2$
is equivalent to Stable Ramsey's Theorem for Pairs $\SRT^2_2$ as
defined in Statement 7.5 of \cite{CJS}.

\begin{cor}
$\RCA \vdash  \HT^{\leq 2}_2 \rightarrow\SRT^2_2$.
\end{cor}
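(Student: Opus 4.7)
The plan is to chain the previous corollary with the results cited in the paragraph immediately preceding the statement. From the previous corollary we already have $\RCA \vdash \HT^{\leq 2}_2 \rightarrow \D^2_2$, so it suffices to argue in $\RCA$ that $\D^2_2$ implies $\SRT^2_2$ (in the sense of Statement 7.5 of \cite{CJS}).

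First I would invoke Theorem 1.4 of \cite{CLY}, which gives $\RCA \vdash \D^2_2 \rightarrow \mathrm{B}\Sigma^0_2$. Combined with the previous corollary, this yields $\RCA \vdash \HT^{\leq 2}_2 \rightarrow (\D^2_2 \wedge \mathrm{B}\Sigma^0_2)$. Next I would cite Lemma 7.10 of \cite{CJS}, which (with the hidden use of $\mathrm{B}\Sigma^0_2$ now justified by the preceding step) shows that over $\RCA$ the conjunction $\D^2_2 \wedge \mathrm{B}\Sigma^0_2$ is equivalent to $\SRT^2_2$. Putting these together gives $\RCA \vdash \HT^{\leq 2}_2 \rightarrow \SRT^2_2$.

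Since every step is a direct citation or a formal consequence of the previous corollary, there is no genuine combinatorial obstacle here; the only thing to be careful about is that the formulation of $\SRT^2_2$ matches the one used in \cite{CJS}, Statement 7.5, and that the applications of \cite{CLY} and \cite{CJS} are made over $\RCA$ rather than a stronger base theory. Both of these bookkeeping checks are already handled by the remark preceding the corollary, so the proof reduces to a one-line implication chain.
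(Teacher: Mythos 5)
Your proposal is correct and matches the paper's own reasoning exactly: both chain the previous corollary ($\HT^{\leq 2}_2 \rightarrow \D^2_2$) with Chong–Lempp–Yang's result that $\D^2_2$ implies $\mathrm{B}\Sigma^0_2$, and then use this to discharge the hidden $\mathrm{B}\Sigma^0_2$ hypothesis in Lemma 7.10 of \cite{CJS} so that $\D^2_2$ is equivalent to $\SRT^2_2$ over $\RCA$.
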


\section{Hindman's Theorem for sums of length at most $3$}  \label{33}

We now strengthen the results of the previous section, at the cost of
allowing longer sums and more colors.  We start by considering
$\HT^{\leq 3}_4$ and then improve the results to $\HT^{\leq 3}_3$.

\begin{thm}
  There is a computable coloring $c:\mathbb{N} \rightarrow 4$ such
  that if $X$ is infinite with $\text{FS}^{\leq 3}(X)$ monochromatic,
  then $0' \leq_T X$.
\end{thm}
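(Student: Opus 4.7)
The plan is to extend the construction of Theorem~2.1 by using four colors to encode two bits per integer and exploiting the extra flexibility of length-$3$ sums. Let $f(k,s)$ be a computable $\{0,1\}$-valued function with $0'(k) = \lim_s f(k,s)$, and reuse the base-$3$ sets $\mathcal{O}_{k,i}$ and the digit labels $(k_s, i_s)$ from Section~\ref{22}. I would define $c : \mathbb{N} \to 4$ by combining two bits: the first derived from $i_s$, and the second a twisted version of $f(k_s,s)$ (as in Theorem~2.1) whose constant value on any monochromatic set pins down $0'(k_s)$.

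For an infinite $X$ with $\text{FS}^{\leq 3}(X)$ monochromatic, the first-bit constraint, by exactly the length-$2$ argument from the proof of Theorem~2.1, should force $X$ to lie in $\bigcup_k \mathcal{O}_{k,i_0}$ for some fixed $i_0 \in \{1,2\}$ and to contain at most one element of each cell $\mathcal{O}_{k,i_0}$: if $s,t \in X \cap \mathcal{O}_{k,i_0}$ were distinct, then $s+t \in \mathcal{O}_{k,3-i_0}$ would have the wrong first bit. The second-bit constraint then gives $f(k_s,s) = v_0$ for all sufficiently large $s \in X$, yielding $0'(k) = v_0$ for every $k$ in the infinite $X$-computable set $K := \{k_s : s \in X\}$.

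The new ingredient for computing \emph{all} of $0'$, rather than just $0'$ on $K$, is to exploit length-$3$ sums together with the BHS trick. For $s,t,u \in X$ with $k_s < k_t < k_u$, the triple sum $s+t+u$ lies in $\mathcal{O}_{k_s,i_0}$, and monochromaticity forces $f(k_s, s+t+u) = v_0$ for arbitrarily large $u$; this gives $X$-computable upper bounds for the stages at which $f(k_s,\cdot)$ stabilizes. I would try to arrange the coloring so that a BHS-style encoding of the $0'$-enumeration into the second bit, combined with the richer set of sums available at length $3$, lets $X$ certify stabilization bounds not only at positions $k \in K$ but at every $k$, so that $X$ can determine $0'(k)$ uniformly.

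The main obstacle will be designing the coloring so that the length-$3$ structure yields genuinely global information about $0'$ rather than the same $\Delta^0_2$-style information extracted in Theorem~2.1. A naive 4-coloring of the form I described only recovers $0'$ on $K$, which is insufficient; the subtlety is to encode stages of the $0'$-enumeration into the second bit in a way that interacts nontrivially with triple sums, forcing each $k \in \mathbb{N}$ (and not merely each $k \in K$) to be witnessed by some triple in $X$. This is where the "very ingenious trick" of Blass, Hirst, and Simpson, together with the new combinatorial ideas suggested by the authors, must enter.
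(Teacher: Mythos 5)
Your proposal correctly identifies the shape of the argument (a two-bit coloring, one bit controlling the digit structure so that the smallest-digit position $\lambda$ is essentially injective on $X$ and all elements share the same leading digit, the other bit carrying arithmetic content), and you correctly diagnose that a naive second bit of the form $f(k_s, s)$ only recovers $0'$ on the $X$-computable set $K = \{k_s : s \in X\}$, which is not enough. But the proposal stops exactly where the real content begins: you explicitly defer to ``the BHS trick'' without supplying it, and the construction you sketch in its place would not work as described.

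Concretely, the paper does \emph{not} encode a stage approximation $f(k,s)$ to $0'$ into the second color bit at all. Instead it fixes a computable injection $f$ with $\text{range}(f) \equiv_T 0'$, writes $n$ in ternary with digits in $\{1,2\}$, and for each $n$ counts gaps $(k_j, k_{j+1})$ between consecutive exponents. A gap $(a,b)$ is \emph{short} if some $y \leq a$ is in $\text{range}(f)$ but has no preimage $\leq b$ (a $\Sigma^0_1$ condition), and \emph{very short} if some $y\leq a$ has a preimage $\leq \mu(n)$ but none $\leq b$ (a decidable condition given $n$). The second bit of the color is $\text{VSG}(n)\bmod 2$. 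The heart of the argument is then a genuinely new lemma: for monochromatic $\text{FS}^{\leq 3}(X)$ (after thinning so that $\mu(n)<\lambda(m)$ whenever $n<m$ in $X$), one shows $\text{SG}(n)$ is even for every $n\in\text{FS}^{\leq 2}(X)$, by adding a single further $m\in X$ chosen large enough to convert short gaps of $n$ into very short gaps of $n+m$ and invoking $c(n+m)=c(m)$. Finally, evenness of $\text{SG}$ on pair-sums forces, for any $n<m$ in $X$, that every $y\leq\mu(n)$ in $\text{range}(f)$ already has a preimage $\leq\lambda(m)$; otherwise $\text{SG}(n+m) = \text{SG}(n)+\text{SG}(m)+1$ would be odd. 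That is the mechanism that gives \emph{global} information about $\text{range}(f)$. Your proposed ``stabilization bounds for $f(k_s,\cdot)$ via triple sums'' is not a substitute for this: nothing in your sketch explains why the coloring would force $X$ to witness anything about positions $k\notin K$, and you acknowledge as much. As written, the proposal restates the difficulty rather than resolving it.
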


\begin{proof}
Let $f:\mathbb{N} \rightarrow \mathbb{N}$ be a computable 1-1 function. We will define a computable coloring $c:\mathbb{N} \rightarrow 4$ such that 
if $X$ is infinite with $\text{FS}^{\leq 3}(X)$ monochromatic, then $X$ computes $\text{range}(f)$. 

For $n \in \mathbb{N}$, write $n = i_0 \cdot 3^{k_0} + \cdots + i_\ell \cdot 3^{k_\ell}$ with $k_0 < \cdots < k_\ell$ and each $i_j \in \{ 1,2 \}$. Define 
$\lambda(n) = k_0$, $\mu(n) = k_\ell$ and $i(n) = i_0$. We will use several properties of the functions $\lambda(n)$, $\mu(n)$ and $i(n)$. The following are 
all straightforward to establish.
\begin{enumerate}
\item[(P1)] If $\lambda(n) < \lambda(m)$, then $\lambda(n+m) = \lambda(n)$ and $i(n+m) = i(n)$. 
\item[(P2)] If $\lambda(n) = \lambda(m)$ and $i(n) = i(m) = 1$, then $\lambda(n+m) = \lambda(n)$ and $i(n+m) = 2$. 
\item[(P3)] If $\lambda(n) = \lambda(m)$ and $i(n) = i(m) = 2$, then $\lambda(n+m) = \lambda(n)$ and $i(n+m) = 1$.
\item[(P4)] If $\mu(n) < \lambda(m)$, then $\lambda(n+m) = \lambda(n)$ and $\mu(n+m) = \mu(m)$.
\end{enumerate}

For $n = i_0 \cdot 3^{k_0} + \cdots + i_\ell \cdot 3^{k_\ell}$ with
the $i_j$ and $k_j$ as above, we refer to the intervals $(k_j,
k_{j+1})$ for $j < \ell$ as the \textit{gaps of} $n$. A gap $(a,b)$ of
$n$ is a \textit{short gap in} $n$ if there is a $y \leq a$ such that
$y \in \text{range}(f)$ but there is no $x \leq b$ such that $f(x) =
y$. (Note that whether a gap $(a,b)$ in $n$ is short does not depend
on $n$.) A gap $(a,b)$ of $n$ is a \textit{very short gap in} $n$ if
there is a $y \leq a$ for which there is an $x \leq \mu(n)$ with
$f(x)=y$ but no $x \leq b$ for which $f(x) = y$.  Note that we can
computably determine the very short gaps in $n$ but can only
computably enumerate the short gaps in $n$.

For each $n$, we let $\text{SG}(n)$ be the number of short gaps in $n$
and we let $\text{VSG}(n)$ be the number of very short gaps in $n$. As
above, we can compute VSG$(n)$ but in general can only approximate
SG$(n)$ in an increasing fashion as we discover the short gaps. We
define our computable coloring by
\[
c(n) = 
\begin{cases}
\text{VSG}(n) \, \text{mod} \, 2 & \text{if } i(n) = 1, \\
2+ (\text{VSG}(n) \, \text{mod} \, 2) & \text{if } i(n) = 2.
\end{cases}
\]

Let $X$ be an infinite set such that $\text{FS}^{\leq 3}(X)$ is
monochromatic. We establish the following two properties.
\begin{enumerate}
\item[(P5)] For all $n, m \in X$, $i(n) = i(m)$.
\item[(P6)] For $k \geq 0$, there is at most one $n \in X$ such that
  $\lambda(n) = k$.
\end{enumerate}
(P5) holds because $i(n) = 1$ implies $c(n) \in \{ 0,1 \}$ and $i(m) =
2$ implies $c(m) \in \{ 2,3 \}$.  (P6) holds since if $n \neq m \in X$
with $\lambda(n) = \lambda(m)$ (and by (P5), $i(n) = i(m)$), then by
(P2) and (P3), $i(n+m) \neq i(n)$ contradicting (P5).

By (P6), we can assume without loss of generality (by computably
thinning out $X$) that if $n, m \in X$ with $n < m$, then $\mu(n) <
\lambda(m)$. The argument now proceeds almost identically to the proof
of Theorem 2.2 in Blass, Hirst and Simpson with one minor change.

First, we claim that for all $n \in \text{FS}^{\leq 2}(X)$,
$\text{SG}(n)$ is even. For this claim, it is important that $n$ is a
sum of at most two elements of $X$. In particular, this claim need not
hold for an arbitrary element of $\text{FS}^{\leq 3}(X)$.

Fix $m \in X$ such that $n < m$, $\mu(n) < \lambda(m)$ and for all $y
\leq \mu(n)$, if $y \in \text{range}(f)$, then there is an $x \leq
\lambda(m)$ with $f(x) = y$. Since $n$ is a sum of at most two
elements of $X$, $n+m \in \text{FS}^{\leq 3}(X)$.  Because $\mu(n) <
\lambda(m)$, the gaps in $n+m$ consist of the gaps in $n$, the gaps in
$m$, and the gap $(\mu(n), \lambda(m))$. We want to count the number of
very short gaps in $n+m$. By the choice of $m$, the gap $(\mu(n),
\lambda(m))$ is not very short in $n+m$. By (P4), $\mu(n+m) = \mu(m)$,
so each gap in $m$ is very short in $n+m$ if and only if it is very
short in $m$. Finally, if $(a,b)$ is a gap in $n$, then $b \leq
\mu(n)$ and hence by the choice of $m$, $(a,b)$ is very short in $n+m$
if and only if it is short in $n$. Therefore, we have
\[
\text{VSG}(n+m) = \text{SG}(n) + \text{VSG}(m).
\]
Since $c(m) = c(n+m)$, the parity of $\text{VSG}(m)$ is equal to the parity of $\text{VSG}(n+m)$ and therefore $\text{SG}(n)$ is even.   

The last claim we need is that if $n, m \in X$ with $n < m$, then for
all $y \leq \mu(n)$, $y \in \text{range}(f)$ if and only if there is
an $x \leq \lambda(m)$ with $f(x) = y$. Note that this claim gives us
a method to compute $\text{range}(f)$ from $X$, completing the
proof. To prove the claim, suppose for a contradiction that there is a
$y \leq \mu(n)$ such that $y \in \text{range}(f)$ but there is no $x
\leq \lambda(m)$ with $f(x) = y$. In this case, the gap $(\mu(n),
\lambda(m))$ is short in $n+m$. Therefore, because the gaps of $n$
(respectively $m$) are short in $n+m$ if and only if they are short in
$n$ (respectively $m$), we have
\[
\text{SG}(n+m) = \text{SG}(n) + \text{SG}(m) + 1.
\]
Since $n \neq m \in X$, we have $n+m \in \text{FS}^{\leq 2}(X)$ and
hence $\text{SG}(n)$, $\text{SG}(m)$ and $\text{SG}(n+m)$ are all even,
giving the desired contradiction.
\end{proof}

Formalizing the proof of this theorem in $\RCA$, we obtain the following corollary.

\begin{cor}
$\RCA \vdash \HT^{\leq 3}_4 \rightarrow \ACA$. 
\end{cor}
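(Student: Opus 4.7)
The plan is to invoke the standard characterization of $\ACA$ over $\RCA$: $\ACA$ is equivalent to the statement that the range of every injection $f : \mathbb{N} \to \mathbb{N}$ exists as a set. Working in $\RCA$ and assuming $\HT^{\leq 3}_4$, I fix such an $f$ and formalize the construction of $c$ from the preceding theorem. The auxiliary functions $\lambda$, $\mu$, and $i$ are $\Delta^0_0$-definable from the base-$3$ representation, and $\text{VSG}(n)$ is defined by a formula whose quantifiers on $f$ are bounded by $\mu(n)$ and by the right endpoint of the gap in question; hence $c$ is $\Delta^0_1$-definable in $f$ and exists by $\Delta^0_1$-comprehension.

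Next I would apply $\HT^{\leq 3}_4$ to $c$ to obtain an infinite $X$ with $\text{FS}^{\leq 3}(X)$ monochromatic. Properties (P5) and (P6) follow from the definition of $c$ together with (P1)--(P3) without any nontrivial induction, and the thinning of $X$ to an infinite subset $X'$ satisfying $\mu(n) < \lambda(m)$ for $n < m$ in $X'$ is a $\Delta^0_1$-in-$X$ construction made possible by (P6). The two main claims in the proof of the theorem --- that $\text{SG}(n)$ is even for every $n \in \text{FS}^{\leq 2}(X')$, and that for $n < m$ in $X'$ and $y \leq \mu(n)$ one has $y \in \text{range}(f)$ iff $(\exists x \leq \lambda(m))\, f(x) = y$ --- then carry over verbatim, since every quantifier appearing in them is either bounded or over the infinite set $X'$.

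From the second claim, $\text{range}(f)$ admits a $\Delta^0_1$ definition relative to $X' \oplus f$: for any given $y$, pick $n_0 \in X'$ with $\mu(n_0) \geq y$ and any $m \in X'$ with $n_0 < m$, and test whether $(\exists x \leq \lambda(m))\, f(x) = y$. Thus $\text{range}(f)$ exists by $\Delta^0_1$-comprehension, giving $\ACA$. The main point to check is that the selection of a witness $m \in X'$ large enough to serve in the first claim requires only bounded search: one needs $\lambda(m)$ to exceed every $x$ such that $f(x) \in [0,\mu(n)]$, and injectivity of $f$ bounds the number of such $x$ by $\mu(n)+1$, so $\Sigma^0_1$-induction suffices. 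With that verified, the rest of the formalization is routine.
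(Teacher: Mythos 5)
Your proposal is correct and takes the same route as the paper: the paper's own ``proof'' is simply the one-line observation that the preceding theorem formalizes in $\RCA$, and you have correctly spelled out the details of that formalization, including the one genuinely nontrivial point (that the witness $m$ with $\lambda(m)$ exceeding every $x$ with $f(x)\leq\mu(n)$ can be shown to exist in $\RCA$ using injectivity of $f$ and $\Sigma^0_1$-induction/bounding).
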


We now improve the previous theorem and corollary from $4$ colors to
$3$ colors.

\begin{thm}
  There is a computable coloring $c:\mathbb{N} \rightarrow 3$ such
  that if $X$ is infinite with $\text{FS}^{\leq 3}(X)$ monochromatic,
  then $0' \leq_T X$.
\end{thm}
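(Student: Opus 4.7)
The plan is to extend the coloring from Theorem~3.1 by merging two of its four colors into one. Define the computable coloring $c:\mathbb{N}\to 3$ by
\[
c(n) = (\text{VSG}(n) \bmod 2) + (i(n)-1) \in \{0,1,2\},
\]
so that color $0$ is $(i=1,\ \text{VSG even})$, color $2$ is $(i=2,\ \text{VSG odd})$, and color $1$ encompasses the two ``mixed'' cases---call them \emph{type A} $(i=1,\ \text{VSG odd})$ and \emph{type B} $(i=2,\ \text{VSG even})$. Suppose $X$ is infinite with $\text{FS}^{\leq 3}(X)$ monochromatic under $c$; I then case-split on the common color.

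If the color is $0$ or $2$, both the $i$-value and the $\text{VSG}$ parity are pinned by the color, so properties (P5) and (P6) follow from (P2) and (P3) just as in Theorem~3.1, and that theorem's proof adapts verbatim (with the $\text{VSG}$-parity flipped in the color-$2$ case) to conclude $0' \leq_T X$.

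The substantive case is color $1$. Split $X = X_A \cup X_B$ by $i$-value; both are $X$-computable. The key claim is that at most one of $X_A, X_B$ can have $\lambda$-values unbounded in $X$. Suppose for contradiction both are unbounded. For any $n \in X$ pick witnesses $m_A \in X_A$ and $m_B \in X_B$ with $\lambda(m_\star) > \mu(n)$ and large enough to witness the short gaps of $n$ as in Theorem~3.1. Since $\text{VSG}(n+m_\star) = \text{SG}(n) + \text{VSG}(m_\star)$ while $\text{VSG}(m_A)$ is odd and $\text{VSG}(m_B)$ is even, and since $i(n+m_A) = i(n+m_B) = i(n)$, the color-$1$ constraint on the two sums would force $\text{SG}(n)$ to have two different parities, a contradiction. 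Thus, without loss of generality, $X_A$ has all $\lambda \leq N$ for some $N$. Then for every $n \in \text{FS}^{\leq 2}(X)$ with $\mu(n) \geq N$ and every $m \in X$ with $\lambda(m) > \mu(n)$, one has $m \in X_B$; rerunning the calculation with only type-B witnesses pins $\text{SG}(n)$ to be even when $i(n)=2$ and odd when $i(n)=1$. Theorem~3.1's contradiction argument then applies: a hypothetical short bridge gap $(\mu(n), \lambda(m))$ would contribute $+1$ to $\text{SG}(n+m)$ and conflict with the derived parity for $n+m$. This furnishes an $X$-computable decision of $y \in \text{range}(f)$, so $0' \leq_T X$.

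The hardest part will be the edge subcase in which $X$ itself is $\lambda$-bounded---that is, both $X_A$ and $X_B$ are---because then no $m \in X$ with $\lambda(m) > \mu(n)$ exists and the spacing argument cannot even start. To handle it, I would iterate pigeonhole to pass to an infinite $Z \subseteq X$ sharing an ever longer common initial digit pattern and with \emph{unbounded} positions for the next (``tail'') digit; this iteration must terminate because each $n \in X$ has only finitely many nonzero base-$3$ digits. The preceding analysis then applies to the tails modulo the fixed common head, still computably in $X$.
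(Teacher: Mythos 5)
Your overall plan---merge two of the four colors and case-split on the monochromatic color---is genuinely different from the paper's, which instead switches to base $7$ and groups the six possible leading heptary digits into three pairs $\{\pm 1\}, \{\pm 2\}, \{\pm 3\} \pmod 7$. In that setup, the multiplication table $i \mapsto 2i \mapsto 3i$ cycles through all three pairs, which immediately forces $|X \cap \mathcal{O}_{k,i}| \le 2$; from this one gets for free that $\lambda(n)$ is unbounded on $X$ and that a computable thinning yields a constant $i_n$, after which the base-$4$ argument applies verbatim. Your ``key claim'' in the color-$1$ case (using an $X_A$-witness and an $X_B$-witness above $\mu(n)$ to derive contradictory parities for $\mathrm{SG}(n)$) is correct and is a nice idea, and the color-$0$ and color-$2$ cases go through as you say.

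However, there is a genuine gap in the $\lambda$-bounded edge subcase, which you acknowledge but do not actually handle. First, you have not ruled out that a monochromatic color-$1$ set can have infinitely many elements with the same $\lambda$-value: the base-$3$ identity $1+1 = 2$ only forces a constraint on the parity of $\mathrm{VSG}(n+m)$, not a contradiction, and $1+1+1 = 3$ carries so that $\lambda(n+m+p) > \lambda(n)$ rather than yielding a controlled residue. This is precisely the structural weakness that the paper's base-$7$ choice is engineered to avoid. Second, the repair you sketch---iterated pigeonholing on a ``common initial digit pattern'' with unbounded ``tail'' digit---is not shown to terminate: there are infinite sets $X$ for which every such pigeonhole stage produces another $\lambda$-bounded class (e.g., $\{\sum_{i=1}^m 3^i : m \ge 1\}$), so termination would have to come from monochromaticity, and you give no argument. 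Third, and most substantively, even granting a common head, the coloring $c$ is evaluated on the full integer $n$, not on $n$ minus the head; the quantities $\mathrm{VSG}(n)$ and $\mathrm{SG}(n)$ count gaps of $n$, which include gaps inside the head and the gap between the head and the tail, and these do not simply decompose as ``$\mathrm{VSG}$ of the tail plus a constant.'' Without a precise accounting of how the head contributes to $\mathrm{VSG}$ of sums, the parity bookkeeping that drives the contradiction cannot be rerun ``modulo the fixed common head.'' Until this subcase is either ruled out or handled with an explicit computation, the proof is incomplete.
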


\begin{proof}
For any $k$ and $i \in \{1,2,3,4,5,6\}$, let $\mathcal{O}_{k,i} = \{ n : n
\equiv i\cdot 7^k \mod 7^{k+1}\}$.  Let $i_n$ denote the first nonzero
heptary bit of $n$, which occurs in the $k_n$th place, so that $n \in
\mathcal{O}_{k_n, i_n}$.  Color each $n \in \mathbb{N}$ red, green or blue as follows with the slash indicating a choice between
two colors depending on whether $\text{VSG}(n)$ is even or odd.
$$c(n) = \begin{cases} R/G &\text{ if } \text{VSG}(n) \text{ is even/odd and } i_n \equiv \pm 1 \mod 7, \\
  G/B &\text{ if } \text{VSG}(n) \text{ is even/odd and } i_n \equiv \pm 2 \mod 7, \\
  B/R &\text{ if } \text{VSG}(n) \text{ is even/odd and } i_n \equiv \pm 3 \mod 7. \\
				\end{cases}$$
				
Let $X\subseteq \mathbb{N}$ be an infinite set such that $\text{FS}^{\leq3}(X)$ is monochromatic. We claim that $X\cap \mathcal{O}_{k,i}$ cannot contain more than $2$ elements.  
To prove this claim, assume that $x,y,z$ are distinct elements of  $X \cap \mathcal{O}_{k,i}$ and hence 
$x+y \in \mathcal{O}_{k,(2i \mod 7)} \cap \text{FS}^{\leq3}(X)$ and $x+y+z \in \mathcal{O}_{k, (3i\mod 7)} \cap \text{FS}^{\leq3}(X)$.  Consider the following table of multiplication facts.
$$\begin{array}{c|c|c}
i & 2i \mod 7 & 3i\mod 7 \\\hline
\pm 1 & \pm 2 & \pm 3 \\
\pm 2 & \pm 3 & \pm 1 \\
\pm 3 & \pm 1 & \pm 2 \\
\end{array}$$
The table shows that $\text{FS}^{\leq3}(X)$ must contain elements from each of the sets $\mathcal{O}_{k, \pm1 \mod 7}, \mathcal{O}_{k, \pm2 \mod 7},$ and 
$\mathcal{O}_{k, \pm3 \mod 7}$ (where $\mathcal{O}_{k, \pm 1 \mod 7} = \mathcal{O}_{k,1} \cup \mathcal{O}_{k,6}$ and similarly for the other sets). 
However, by the definition of the coloring $c$, it is not possible for a monochromatic set to intersect all three of these sets. Therefore, if $x,y,z \in X \cap \mathcal{O}_{k,i}$ are distinct, 
then $\text{FS}^{\leq3}(X)$ is not monochromatic, proving the claim.  

By the claim, if $\text{FS}^{\leq3}(X)$ is monochromatic, then $X$ must include elements $n$ for which $k_n$ is arbitrarily large. Also, we can computably thin $X$ so that all of its 
elements $n$ share the same value for $i_n$ and thus share the same coloring convention, guaranteeing a common parity for $\text{VSG}(n)$.  From here, 
we proceed as in the proof of the previous theorem.
\end{proof}

\begin{cor}
$\RCA \vdash \HT^{\leq 3}_3 \rightarrow \ACA$. 
\end{cor}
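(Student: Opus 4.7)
The plan is to verify that the proof of Theorem 3.3 formalizes inside $\RCA$, yielding a derivation of $\ACA$ from $\HT^{\leq 3}_3$. Over $\RCA$, $\ACA$ is equivalent to the statement that the range of every one-to-one function $f : \mathbb{N} \to \mathbb{N}$ exists as a set, so given such an $f$ and assuming $\HT^{\leq 3}_3$, it suffices to produce $\text{range}(f)$.

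First I would construct the coloring inside $\RCA$. The auxiliary functions $\lambda$, $\mu$, $i$, and $\text{VSG}$ are all primitive recursive in $f$, so the coloring $c : \mathbb{N} \to 3$ is $\Delta^0_1$ and exists by recursive comprehension. Applying $\HT^{\leq 3}_3$ to $c$ yields an infinite $X$ with $\text{FS}^{\leq 3}(X)$ monochromatic. Next, I would reproduce the bound $|X \cap \mathcal{O}_{k,i}| \leq 2$ (via the three-color obstruction in the multiplication table), then apply $\RT^1$ for finitely many colors (provable in $\RCA$) together with a straightforward primitive recursion on $X$ to thin $X$ to an infinite $X'$ on which $i_n$ is constant and $\mu(n) < \lambda(m)$ whenever $n < m$ with $n, m \in X'$.

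The parity argument then goes through verbatim: monochromaticity of $\text{FS}^{\leq 3}(X)$ forces, for any $n < m$ in $X'$ and $y \leq \mu(n)$, the equivalence $y \in \text{range}(f) \iff (\exists x \leq \lambda(m))\, f(x) = y$. This is a $\Delta^0_1$ definition of $\text{range}(f)$ in the parameters $X'$ and $f$, so recursive comprehension produces $\text{range}(f)$, yielding $\ACA$.

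The main (and essentially only) thing to verify is that none of these steps exceed $\RCA$'s induction strength. The quantities $\text{SG}(n)$ and $\text{VSG}(n)$ are bounded by $\mu(n)$ and hence computable from $n$ and $f$; the key equalities $\text{VSG}(n+m) = \text{SG}(n) + \text{VSG}(m)$ and $\text{SG}(n+m) = \text{SG}(n) + \text{SG}(m) + 1$ are $\Delta^0_1$ assertions, and the parity comparisons used to derive the contradiction require only $\Sigma^0_1$ induction. All of this is available in $\RCA$, so the formalization carries through and the corollary follows.
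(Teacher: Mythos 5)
Your approach matches the paper's: the corollary follows by formalizing the proof of Theorem 3.3 in $\RCA$ (the paper says this explicitly for the analogous Corollary 3.2), and your outline correctly traces the argument. One factual slip in your verification paragraph: $\text{SG}(n)$ is \emph{not} computable from $n$ and $f$. Deciding whether a gap $(a,b)$ is short requires knowing whether some $y \leq a$ lies in $\text{range}(f)$, which is a $\Sigma^0_1$ question; the paper explicitly notes that one can only computably enumerate the short gaps, and this is exactly why the coloring $c$ is defined via $\text{VSG}$ (which is computable) rather than $\text{SG}$. Being bounded by $\mu(n)$ does not make $\text{SG}(n)$ computable. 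This slip does not sink the formalization: $\text{SG}(n)$ is still well-defined as a number by bounded $\Sigma^0_1$-comprehension (available in $\RCA$), and the two displayed equalities are proved for specific pairs $n, m$ rather than as effective identities in $n$. You should also flag that the choice of a suitable $m$ in the parity argument, namely one with $\lambda(m)$ beyond all witnesses $x$ with $f(x) = y$ for $y \le \mu(n)$ in $\text{range}(f)$, uses B$\Sigma^0_1$, which $\RCA$ proves.
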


\section{Open Questions}

Some of the open questions involve comparing bounded versions of
Hindman's Theorem with special cases of Ramsey's Theorem.
As usual, let $\RT^n_k$ denote Ramsey's Theorem for $k$-colorings of
$n$-element sets.  Thus, $\RT^n_k$ asserts that whenever the
$n$-element subsets of $\mathbb{N}$ are $k$-colored, there is an
infinite set $X \subseteq \mathbb{N}$ such that all $n$-element
subsets of $X$ have the same color.
 
We have provided some lower bounds on the strength and effective
content of some versions of Hindman's Theorem for bounded sums.
However, we do not know any upper bounds for the effective content and
strength of $\HT^{\leq n}_k$ for $n > 1, k > 1$ beyond those known
from \cite{BHS} for Hindman's Theorem itself.  In particular, we do not
know whether any of these bounded versions of Hindman's Theorem are
provable in $\ACA$, or whether any of them imply $\HT$.  We also do not
know whether $\HT^{\leq 2}_2$ implies $\ACA$ in $\RCA$, or whether
Ramsey's Theorem for $2$-coloring of pairs $\RT^2_2$ implies
$\HT^{\leq 2}_2$ in $\RCA$.

One might also consider the restriction of Hindman's Theorem to sums
of length exactly $n$.  Let $\HT^{=n}_k$ denote the assertion that for
each $k$-coloring $c : \mathbb{N} \rightarrow k$ there is an infinite
set $X \subseteq \mathbb{N}$ such that $\{\sum F | F \subseteq X
\text{ and } |F| = n\}$ is monochromatic.  It is clear that $\RT^n_k$
implies $\HT^{=n}_k$ in $\RCA$ for each $n, k \geq 1$, and indeed
$\HT^{=n}_k$ is just the restriction of $\RT^n_k$ to colorings $c$ of
$n$-element sets $F$ such that $c(F)$ depends only on $\sum F$.  It
follows from \cite{J}, Theorem 5.5, that each computable instance of
$\HT^{=n}_k$ has a $\Pi^0_n$ solution.  It is unknown whether this
result can be improved to $\Sigma^0_n$ or better.  It also remains
open for each $n, k \geq 2$ whether $\HT^{=n}_k$ implies $\RT^n_k$ in
$\RCA$.  We do not even know whether each computable instance of
$\HT^{=2}_2$ has a computable solution.

\end{document}